\numberwithin{equation}{section}
\newtheorem{theorem}{Theorem}[section]
\newtheorem{conjectures}[theorem]{Conjectures}
\theoremstyle{definition}
\newtheorem{problem}[theorem]{Problem}
\newtheorem{remark}[theorem]{Remark}
\theoremstyle{remark}
\newenvironment{romenumerate}{\begin{enumerate}
 }{\end{enumerate}}
\newcounter{oldenumi}
{\setcounter{oldenumi}{\value{enumi}}
\begin{romenumerate} \setcounter{enumi}{\value{oldenumi}}}
{\end{romenumerate}}
\newcounter{thmenumerate}
\newcounter{xenumerate}   
\newcommand{\refT}[1]{Theorem~\ref{#1}}
\newcommand{\refR}[1]{Remark~\ref{#1}}
\newcommand{\refS}[1]{Section~\ref{#1}}
\newcommand{\refTab}[1]{Table~\ref{#1}}
\newcommand\marginal[1]{\marginpar{\raggedright\parindent=0pt\tiny #1}}
\xdef\klockan{\the\count1.0\the\count255}
\xdef\klockan{\the\count1.\the\count255}\fi
\newcommand\bigpar[1]{\bigl(#1\bigr)}
\newcommand\Bigpar[1]{\Bigl(#1\Bigr)}
\newcommand\lrpar[1]{\left(#1\right)}
\def\rompar(#1){\textup(#1\textup)}    
\newcommand\parfrac[2]{\lrpar{\frac{#1}{#2}}}
\def\xexp(#1){e^{#1}}
\newcommand\upto{\nearrow}
\newcommand\ie{i.e.\spacefactor=1000}
\newcommand\eg{e.g.\spacefactor=1000}
\newcommand{\as}{a.s.\spacefactor=1000}
\newcommand\ii{\mathrm{i}}
\newcommand\eqd{\overset{\mathrm{d}}{=}}
\newcommand\bbR{\mathbb R}
\newcounter{CC}
\newcounter{cc}
\renewcommand\Re{\operatorname{Re}}
\newcommand\E{\operatorname{\mathbb E{}}}
\newcommand\argmax{\operatorname{argmax}}
\newcommand\gf{\varphi}
\newcommand\gam{\gamma}
\newcommand\eps{\varepsilon}
\newcommand\cS{{\mathcal S}}
\newcommand\qq{^{1/2}}
\newcommand\qqq{^{1/3}}
\newcommand\qqw{^{-1/2}}
\newcommand\qqqw{^{-1/3}}
\newcommand\qw{^{-1}}
\newcommand\qqqbw{^{-2/3}}
\newcommand\qqc{^{3/2}}
\renewcommand{\=}{:=}
\newcommand\intoooo{\int_{-\infty}^\infty}
\newcommand\dd{\,\textup{d}}
\newcommand\rhs{right-hand side}
\newcommand\Ai{\mathrm{Ai}}
\newcommand\Bi{\mathrm{Bi}}
\newcommand\hatt{\widehat}
\newcommand\intiooioo{\int_{-\ii\infty}^{\ii\infty}}
\newcommand\gsb{b}
\newcommand{\ba}{\begin{array}}
\newcommand{\ea}{\end{array}}
\newcommand{\bit}{\begin{itemize}}
\newcommand{\eit}{\end{itemize}}
\newcommand{\ben}{\begin{enumerate}}
\newcommand{\een}{\end{enumerate}}
\newcommand{\bal}{\begin{align}}
\newcommand{\bals}{\begin{align*}}
\newcommand{\bs}{\begin{skip}}
\newcommand{\eal}{\end{align}}
\newcommand{\eals}{\end{align*}}
\newcommand{\es}{\end{skip}}
\newcommand\REM[1]{{\raggedright\texttt{[#1]}\par\marginal{XXX}}}
\newcommand\urladdrx[1]{{\urladdr{\def~{{\tiny$\sim$}}#1}}}
\begin{document}
\title[Moments of the location of the maximum]
{Moments of the location of the maximum of Brownian motion with parabolic drift}

\date{18 September 2012}

\author{Svante Janson}
\address{Department of Mathematics, Uppsala University, PO Box 480,
SE-751~06 Uppsala, Sweden}
\email{svante.janson@math.uu.se}
\urladdrx{http://www.math.uu.se/~svante/}
\thanks{Partly supported by the Knut and Alice Wallenberg Foundation}


\subjclass[2010]{60J65}

\begin{abstract} 
We derive integral formulas, involving the Airy function, for moments of the
time a two-sided Brownian motion with parabolic drift attains its maximum.
\end{abstract}

\maketitle

\section{Introduction}\label{S:intro}

Let $W(t)$ be a two-sided Brownian motion with $W(0)=0$;
i.e., $(W(t))_{t\ge0}$ and
$(W(-t))_{t\ge0}$ are two independent standard Brownian
motions.
Fix $\gamma>0$, and consider the Brownian motion with parabolic drift
\begin{equation}\label{wg}
  W_\gamma(t)\=W(t)-\gamma t^2.
\end{equation}
We are interested in 
the maximum 
\begin{equation}\label{mg}
   M_\gamma\=\max_{-\infty<t<\infty} W_\gamma(t)
\end{equation}
of $W_\gam$ (which is \as{} finite), and, in particular,  
the \emph{location} of the maximum, which we denote by 
\begin{equation}\label{zg}
V_\gam\=\argmax_t\bigpar{W_\gam(t)};
\end{equation}
in other words, $V_\gam=t \iff W_\gam(t)= M_\gam$. 
(The maximum in \eqref{mg} is \as{} attained at a unique point, 
so $V_\gam$ is well-defined a.s.)

The parameter $\gam$ is just a scale parameter, see \refS{Scale}, so it can
be fixed arbitrarily without loss of generality.

The distribution of $V_\gam$ was called \emph{Chernoff's distribution} by
\citet{GW01} since it apparently first appeared in \citet{Chernoff}.
It has been studied by several  authors;
in particular, \citet{G85,G89} gave a description of the 
distribution 
and \citet{GW01} give more explicit analytical and numerical formulas;
see also \citet{DS85}.
It has many applications in statistics,
see for example \citet{GW01} and the references given there, or, for a more
recent example, \citet{Anevski}.

The purpose of the present paper is to give formulas for the moments of
$V_\gam$ in terms of integrals involving the Airy function
$\Ai(x)$.
(Recall that $\Ai(x)$ satisfies
$\Ai''(x)=x\Ai(x)$ 
and is
up to a constant factor
the unique solution 
that tends to 0 as $x\upto+\infty$.
See further  \cite[10.4]{AS}.)

All odd moments of $V_\gam$ vanish by symmetry, and 
our main result is the following formula for the even moments,
proved in \refS{Spf}.
(The special case $n=2$ is given by \citet{G11}.)
\begin{theorem}\label{T1}
For every even positive integer $n$, there is a polynomial
$p_n$ of degree at most $n/2$ such that
\begin{equation}\label{evk}
  \E V_\gam^n
=\frac{2^{-n/3}\gam^{-2n/3}}{2\pi\ii} 
\intiooioo \frac{p_n(z)}{\Ai(z)^2}{\dd z}
=\frac{2^{-n/3}\gam^{-2n/3}}{2\pi} 
\intoooo \frac{p_n(\ii y)}{\Ai(\ii y)^2}{\dd y}. 
\end{equation}
In particular, the variance of $V_\gam$ is
\begin{equation}\label{ev}
  \E V_\gam^2=
-\frac{2^{-2/3}\gam^{-4/3}}{6\pi\ii} \intiooioo \frac{z}{\Ai(z)^2}{\dd z}
=\frac{2^{-2/3}\gam^{-4/3}}{6\pi\ii} \intoooo \frac{y}{\Ai(\ii y)^2}{\dd y}. 
\end{equation}
\end{theorem}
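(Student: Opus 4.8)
The plan is to strip off the scale parameter by Brownian scaling, feed in Groeneboom's analytic description of Chernoff's distribution, and then read off the moments by a Plancherel computation followed by a reduction driven entirely by the Airy equation $\Ai''(z)=z\Ai(z)$. By Brownian scaling, $\bigpar{W_\gam(\gam^{-2/3}s)}_s\eqd\gam^{-1/3}\bigpar{W_1(s)}_s$, so $V_\gam\eqd\gam^{-2/3}V_1$ and hence $\E V_\gam^n=\gam^{-2n/3}\E V_1^n$ (this is the content of \refS{Scale}); moreover the law of $W_1$ is invariant under $t\mapsto-t$, so $V_1\eqd-V_1$ and the odd moments vanish. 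It thus remains to prove that for even $n=2m$ the moment $\E V_1^n$ equals $2^{-n/3}(2\pi\ii)^{-1}\intiooioo p_n(z)\Ai(z)^{-2}\dd z$ for a polynomial $p_n$ of degree at most $m$.

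For the law of $V_1$ I would use Groeneboom's description \cite{G85,G89,GW01}. Conditionally on the height and location of the maximum, the two arms of $W_1$ are independent, and each behaves like a Brownian motion forced to stay below a parabola; the Feynman--Kac operator for a parabolic barrier is an Airy operator, which is how $\Ai$ enters. The resulting representation of the density has the product form $f_{V_1}(t)=\tfrac12\,g(t)g(-t)$, where the one-sided factor $g$ is a smooth, rapidly decreasing function with Fourier transform $\widehat g(\xi)=1/\Ai(\ii\,2^{1/3}\xi)$. The dilation by $2^{1/3}$ is precisely the rescaling that normalizes the underlying second-order ODE to $\Ai''=z\Ai$, and is the origin of the power of $2$ in \eqref{evk}; since $\Ai$ has no zeros on the imaginary axis and $|\Ai(\ii y)|$ grows like $\exp(\tfrac{\sqrt2}{3}|y|^{3/2})$, $\widehat g$ decays super-polynomially.

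Now write $t^n g(t)g(-t)=\bigl(t^m g(t)\bigr)\bigl(t^m g(-t)\bigr)$. Since multiplication by $t^m$ is $m$-fold differentiation on the transform side, Plancherel's identity gives
\[
  \E V_1^n=\tfrac12\int_{\mathbb R}t^n g(t)g(-t)\dd t=\frac1{4\pi}\int_{\mathbb R}\widehat g^{\,(m)}(\xi)^2\dd\xi .
\]
Inserting $\widehat g(\xi)=1/\Ai(\ii 2^{1/3}\xi)$ and changing variables $z=\ii 2^{1/3}\xi$ turns the right-hand side into a constant (a power of $\ii 2^{1/3}$) times $\frac1{2\pi\ii}\intiooioo w^{(m)}(z)^2\dd z$, where $w:=1/\Ai$; the further substitution $z=\ii y$ recovers the second expression in \eqref{evk}.

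It remains to prove that for every $m\ge1$,
\[
  \frac1{2\pi\ii}\intiooioo w^{(m)}(z)^2\dd z=\frac1{2\pi\ii}\intiooioo\frac{p_n(z)}{\Ai(z)^2}\dd z
\]
for a polynomial $p_n$ of degree at most $m$. From $\Ai''=z\Ai$ one gets $w\,w''=2(w')^2-z\,w^2$. Integrating by parts along the imaginary axis — all boundary terms vanish by the super-exponential decay of $w$ and its derivatives — and using this identity to eliminate the highest derivative reduces $\int w^{(m)}(z)^2\dd z$ to a combination of auxiliary integrals of the form $\int z^{a}(w')^{2b}w^{2-2b}\dd z$; each of these is in turn reduced, again by integration by parts together with the same identity (via self-referential steps such as $\int(w')^2\dd z=\tfrac13\int z\,w^2\dd z$ and $\int(w')^4w^{-2}\dd z=\tfrac15\int z^2w^2\dd z$), to $\int(\text{polynomial in }z)\,w^2\dd z$, the degree going up by one at each step. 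For $m=1$ this is exactly $\int(w')^2\dd z=\tfrac13\int z\,w^2\dd z$, i.e.\ $p_2\propto z$, which is \eqref{ev}; an induction on $m$ gives the general case, and the degree bookkeeping yields $\deg p_n\le n/2$ (for $m=2$ one gets $p_4\propto z^2$, while for $m=3$ a constant term already appears, so the bound is genuinely ``at most''). Convergence of every integral is immediate from $\Ai(\ii y)^{-2}=O\bigl(|y|^{1/2}e^{-\frac{2\sqrt2}{3}|y|^{3/2}}\bigr)$. I expect the real work — and the main obstacle — to be precisely this reduction: showing that the family of Airy integrals $\int z^a(w')^{2b}w^{2-2b}\dd z$ is closed under ``integrate by parts and substitute $\Ai''=z\Ai$'' and always collapses to a polynomial in $z$ times $w^2$ of the correct degree, while keeping track of all the constants so that the prefactor comes out as $2^{-n/3}$.
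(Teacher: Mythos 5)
Your route is essentially the paper's: Groeneboom's product form $f=\tfrac12\,g(\cdot)g(-\cdot)$, Fourier analysis to turn $\E V^{2m}$ into $\frac{(-1)^m}{2\pi\ii}\intiooioo \bigpar{(1/\Ai)^{(m)}(z)}^2\dd z$ (your Plancherel identity is the $j=k=m$ case of the paper's symmetrized formula $\E V^{j+k}=\frac{(-1)^{j}}{2\pi\ii}\intiooioo(1/\Ai)^{(j)}(1/\Ai)^{(k)}\dd z$, which the paper gets by differentiating the characteristic function and integrating by parts), followed by a reduction driven by $\Ai''=z\Ai$. Two concrete problems remain. First, your normalization of $\widehat g$ is wrong: for $\gam=1$ Groeneboom's one-sided factor has transform $2^{1/3}/\Ai(\ii\,2^{-1/3}\xi)$, not $1/\Ai(\ii\,2^{1/3}\xi)$ (the paper instead fixes $\gam=2^{-1/2}$, for which $\widehat g(t)=2^{1/2}/\Ai(\ii t)$ and no dilation is needed at all). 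With your version the constant multiplying $\intiooioo w^{(m)}(z)^2\dd z$ acquires a spurious factor $2^{2(n-2)/3}$; this is invisible at $n=2$ — which is why your consistency check passed — but already falsifies \eqref{evk} at $n=4$.

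Second, the step you rightly single out as the crux does not close with the family you propose: $\int z^a(w')^{2b}w^{2-2b}\dd z$ (with $w=1/\Ai$) contains only even powers of $w'=-\Ai'/\Ai^2$, but the integration-by-parts reduction necessarily passes through odd powers; e.g.\ reducing $\intiooioo z\,(w')^{2}\dd z$ produces the intermediate term $\intiooioo \Ai'(z)\Ai(z)^{-3}\dd z$ (which happens to vanish, being an exact derivative, but it does occur). The paper's fix is to work with the full family $I(j,k,\ell)=\frac1{2\pi\ii}\intiooioo z^j\Ai'(z)^k\Ai(z)^{-\ell}\dd z$, $\ell>k$, and to exploit the single identity
\begin{equation*}
0=\frac1{2\pi\ii}\intiooioo\frac{\dd}{\dd z}\Bigpar{\frac{z^j\Ai'(z)^k}{\Ai(z)^{\ell}}}\dd z
= jI(j-1,k,\ell)+kI(j+1,k-1,\ell-1)-\ell I(j,k+1,\ell+1),
\end{equation*}
which yields a recursion strictly decreasing in $k$, preserving $\ell-k$ and non-increasing in $j+k/2$; iterating terminates at the terms $I(p,0,2)$ with $0\le p\le n/2$, which is exactly the degree bound $\deg p_n\le n/2$. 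With these two repairs your argument becomes the paper's proof.
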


The integrals are rapidly converging and easily computed numerically by
standard software.

The polynomials $p_n(z)$ can be found explicitly for any given $n$ by the
procedure in \refS{Spf}, but we do not know any general formula.
They are given for small $n$ in \refTab{Tab}.
See further the conjectures and problems in \refS{S?}.

Numerical values of the first ten absolute moments are given by \citet{GW01};
the first four were computed by Groeneboom and Sommeijer (1984, unpublished).
(Their values for the even moments agree with our formula.)

The maximum $M_\gam$ also appears in many applications.
Its distribution is given in \citet{G85,G89,G10}
and \citet{DS85},
see also,
for example,
\citet{B75,B81},
\citet{D74,D89},
\citet{SJ243}.
(\citet{G89} describes even the
joint distribution of the maximum $M_\gam$ and its location $V_\gam$, see also
\citet{DS85}.)
Formulas for the mean are given by  \citet{DS85}, see also
\citet{SJ243}; in particular 
\begin{equation}\label{em}
  \E M_\gam=
-\frac{2^{-2/3}\gam^{-1/3}}{2\pi\ii} \intiooioo \frac{z}{\Ai(z)^2}{\dd z}. 
\end{equation}
Comparing \eqref{em} and \eqref{ev}, we find the simple relation
\cite{G11}
\begin{equation}
  \E V_\gam^2=\frac1{3\gam}\E M_\gam.
\end{equation}
Since $M_\gam=W_{V_\gam}-\gam V_\gam^2$, this implies that, at the maximum
point, 
\begin{align}
    \E W_{V_\gam}=\frac4{3}\E M_\gam
\qquad\text{and}\qquad
  \E \gam V_\gam^2=\frac1{3}\E M_\gam
=   \frac14 \E W_{V_\gam}.
\end{align}
It would be interesting to find a direct proof of these simple relations.

Formulas for second and higher moments of $M_\gam$ are given in
\citet{SJ243}; however, they are more complicated and do not correspond to
the formulas for moments of $V_\gam$ in the present paper.
It would be interesting to find relations between higher moments of $M_\gam$
and moments of $V_\gam$.

Let us finally mention that the random variable $V_\gam$ is the value at a
fixed time (0, to be precise)
of the stationary stochastic process 
\begin{equation}
V_\gam(x)\=\argmax_t\bigpar{W(t)-\gam(t-x)^2}, 
\end{equation}
which is studied by
\citet{G85,G89,G11}. It would be interesting to find formulas for joint
moments of $V_{\gam}(x)$, in particular for the covariances.

\section{Scaling}\label{Scale}

For any $a>0$, $W(at)\eqd a\qq W(t)$ (as processes on
$(-\infty,\infty)$), and thus
  \begin{align}
V_\gam&= a \argmax \bigpar{W(at)-\gamma (at)^2}
\notag\\&
\eqd a\argmax \bigpar{a\qq W(t)-a^2\gamma t^2}
=aV_{a\qqc\gam}.	\label{b1m}
 \end{align}
The parameter $\gam$ is thus just a scale parameter, and it suffices
to consider a single choice of $\gam$. 
Although the choices $\gam=1$ and $\gam=1/2$ seem most natural, we will use
$\gam=1/\sqrt2$ which gives simpler formulas. We thus define
$V\=V_{1/\sqrt2}$ and have, for any $\gam$,
\begin{equation}\label{vgam}
  V_\gam\eqd2\qqqw\gam\qqqbw V.
\end{equation}

Similarly, 
\begin{equation}
  M_\gam\eqd \gam_1\qqq\gam\qqqw M_{\gam_1}
\end{equation}
for any $\gam,\gam_1>0$.

\section{First formulas for moments}

By \citet[Corollary 3.3]{G89}, $V$ has the density
\begin{equation}\label{vf}
  f(x)=\tfrac12g(x)g(-x),
\end{equation}
where $g$ has the Fourier transform, see \cite[(3.8)]{G89}
(this is where our choice $\gam=2\qqw$ is convenient),
\begin{equation}\label{hg}
  \hatt g(t)\=\intoooo e^{\ii tx}g(x)\dd x = \frac{2^{1/2}}{\Ai(\ii t)},
\qquad -\infty <t<\infty . 
\end{equation}
Note that $|\Ai(\ii t)|\to\infty$ rapidly as $t\to\pm\infty$, 
see \cite[10.4.59]{AS} or \cite[(A.3)]{SJ243},
so $\hatt g(t)$
is rapidly decreasing.
In fact, it follows from the precise asymptotic formula
 \cite[10.4.59]{AS} that
 \begin{equation}\label{sw}
   \Ai(x+\ii y)\qw = O\bigpar{e^{-cy\qqc}}
 \end{equation}
for some $c>0$, uniformly for $|x|\le A$ for any fixed $A$ and $|y|\ge1$,
say (to avoid the zeros of $\Ai$). By differentiation (Cauchy's estimates,
see \eg{}  \cite[Theorem 10.25]{Rudin-real}), 
it follows that the same holds for all derivatives
of $\Ai(x+\ii y)\qw$, and thus all derivatives of $\hatt g(t)$ 
decrease rapidly.
In particular, $\hatt g$
belongs to the Schwartz class $\cS$ of rapidly decreasing functions on
$\bbR$; since this class is preserved by the Fourier transform, also
$g\in\cS$
(see \eg{} \cite[Theorem 7.7]{Rudin-FA}).
In particular, $g$ is integrable.

The characteristic function of $V$ is the Fourier transform $\hatt f(t)$, 
and thus by \eqref{vf}, 
\eqref{hg}
and
standard Fourier analysis 
(see \eg{} \cite[Theorems 7.7--7.8]{Rudin-FA}, but note the different
normalization chosen there),
with $\check g(t)\=g(-t)$,  
\begin{equation}\label{chf}
  \begin{split}
  \gf(t)
&=\hatt f(t)
=\frac12\widehat{g\check g}(t)
=\frac12\cdot\frac1{2\pi}\bigpar{\hatt g*\hatt{\check g}}
=\frac1{4\pi}{\hatt g*\check{\hatt g}}
\\
&=\frac1{2\pi}\intoooo\frac{\dd s}{\Ai(\ii(t+s))\Ai(\ii s)}	
=\frac1{2\pi\ii}\intiooioo\frac{\dd z}{\Ai(z+\ii t)\Ai(z)}.	
  \end{split}
\end{equation}
This is also given in \citet[Lemma 2.1]{G11} (although with typos in the
formula). 
The last integral is taken along the imaginary axis, but we can change the
path of integration, as long as it passes to the right of the zeros $a_n$ of
the Airy function (which are real and negative), for example a line $\Re
z=\gsb$ with 
$\gsb>a_1=-|a_1|$. 

We pause to note the following.

\begin{theorem}
  \label{Tmgf}
 The moment generating function $\E e^{tV}$  
is an entire function of $t$, and is given by, for any 
complex $t$,
\begin{equation}\label{mgf}
  \E e^{tV}
=\frac1{2\pi\ii}\int_{\gsb-\ii\infty}^{\gsb+\ii\infty}
 \frac{\dd z}{\Ai(z+ t)\Ai(z)}, 
\end{equation}
for any real $\gsb$ with $\gsb>a_1$ and $\gsb+\Re t>a_1$.
\end{theorem}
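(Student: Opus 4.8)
The plan is to prove that both sides of~\eqref{mgf} are entire functions of~$t$ and that they agree when $t$ is purely imaginary, where \eqref{chf} and the contour shift noted just after it apply directly; the identity theorem then extends the equality to all complex~$t$.

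\emph{The left-hand side.} First I would show that the density $f$ of $V$ decays faster than any exponential; the fact that $g$, hence $f$, lies in $\cS$ only gives polynomial decay and is not enough. By~\eqref{hg} and~\eqref{sw}, $\hatt g(t)=2\qq/\Ai(\ii t)=O\bigpar{e^{-c|t|\qqc}}$ for real $t$, and likewise on every fixed horizontal line. Since $\hatt g$ is meromorphic in~$t$ with poles only at the points $\ii|a_n|$ on the positive imaginary axis, Fourier inversion $g(x)=\frac1{2\pi}\intoooo e^{-\ii tx}\hatt g(t)\dd t$ together with shifting the contour downward by an arbitrary amount gives $g(x)=O\bigpar{e^{-\sigma x}}$ as $x\to+\infty$ for every $\sigma>0$; the corresponding upward shift (by any amount below $|a_1|$, so as not to cross a pole) applied to $g(-x)$ shows in particular that $g$ stays bounded near $-\infty$. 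Hence $f(x)=\tfrac12 g(x)g(-x)$ decays faster than any exponential as $x\to+\infty$, and, since $f(-x)=f(x)$ by~\eqref{vf}, also as $x\to-\infty$. It follows that $\E e^{tV}=\intoooo e^{tx}f(x)\dd x$ converges absolutely for every $t\in\bbC$ and, by a standard argument (Morera's theorem, or differentiation under the integral), is an entire function of~$t$.

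\emph{The right-hand side.} Fix $\gsb>a_1$. The poles of $1/\bigpar{\Ai(z+t)\Ai(z)}$ lie on the vertical lines $\Re z=a_n$ and $\Re z=a_n-\Re t$, all of which lie strictly to the left of $\Re z=\gsb$ once $\gsb+\Re t>a_1$; moreover, on $\Re z=\gsb$ the integrand is $O\bigpar{e^{-c|\Im z|\qqc}}$ by~\eqref{sw}, with bounds locally uniform in~$t$. Hence the integral in~\eqref{mgf} is holomorphic in~$t$ on the half-plane $\set{\Re t>a_1-\gsb}$. If $a_1<\gsb<\gsb'$ and $t$ lies in both corresponding half-planes, then the integrals over $\Re z=\gsb$ and $\Re z=\gsb'$ agree by Cauchy's theorem: there are no poles in the strip $\gsb\le\Re z\le\gsb'$, and the integrand decays like $e^{-c|\Im z|\qqc}$ on the horizontal sides, so those contributions vanish in the limit. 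Letting $\gsb\to+\infty$, these half-plane pieces glue into a single entire function, which by construction equals the right-hand side of~\eqref{mgf} for every admissible pair $(\gsb,t)$.

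\emph{Conclusion.} For $t=\ii s$ with $s$ real one has $\E e^{tV}=\gf(s)$, and by~\eqref{chf}, with the integral moved onto $\Re z=\gsb$ (legitimate for any $\gsb>a_1$, since for real $s$ all poles lie on the lines $\Re z=a_n$), this equals the right-hand side of~\eqref{mgf}. So the two entire functions agree on the imaginary axis, hence everywhere, which proves~\eqref{mgf}. I expect the only real obstacle to be the tail estimate for $f$ in the first step: $g$ itself does \emph{not} decay faster than every exponential on the side where the poles $\ii|a_n|$ obstruct the contour shift, so it is essential to use the product form of $f$ in~\eqref{vf} together with its symmetry; everything past that point is routine complex analysis resting on the Airy bound~\eqref{sw}.
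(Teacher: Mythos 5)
Your proof is correct, and it follows the same overall strategy as the paper: show the moment generating function is entire, show the right-hand side of \eqref{mgf} defines an entire function of $t$ (consistently over the admissible $\gsb$), and identify the two via their agreement for purely imaginary $t$ coming from \eqref{chf} — this is exactly what the paper compresses into the phrase ``follows from \eqref{chf} by analytic continuation.'' The one substantive difference is the first step: the paper simply cites Groeneboom [G89, Corollary 3.4(iii)] for the super-exponential decay of the density $f$, whereas you derive it from \eqref{hg} and \eqref{sw} by shifting the Fourier inversion contour for $g$, correctly observing that $g$ itself decays only at exponential rate $<|a_1|$ on the side obstructed by the poles at $\ii|a_n|$, so that the product form \eqref{vf} and the symmetry of $f$ are genuinely needed. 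This makes your argument self-contained where the paper's is not, at the cost of some extra contour-shifting; both are valid. (One cosmetic slip: membership in $\cS$ gives decay faster than any \emph{polynomial}, not ``polynomial decay'' — but your point that this is insufficient for finiteness of $\E e^{tV}$ at real $t\neq0$ stands.)
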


\begin{proof}
The density function $f(t) =\exp(-t^3/3)$ as
$t\to\pm\infty$ by \cite[Corollary 3.4(iii)]{G89}.
Hence,  $\E e^{tV}<\infty$ for
every real $t$, which implies that $\E e^{tV}$ is an entire function of $t$.
The formula \eqref{mgf} now follows from \eqref{chf}
by analytic continuation (for each fixed $\gsb$).
\end{proof}

By differentiation of \eqref{chf} (or \eqref{mgf}) we obtain, for any $n\ge 0$,
\begin{equation}\label{vak}
  \E V^n = (-\ii)^n \frac{\dd^n}{\dd t^n}\gf(0)
=\frac{1}{2\pi\ii}\intiooioo \frac{\dd^n}{\dd z^n}\parfrac{1}{\Ai(z)}
\frac{\dd z}{\Ai(z)}.
\end{equation}

By integration by parts, this is generalized to:
\begin{theorem}
  For any $j,k\ge0$,
\begin{equation}\label{jut}
  \E V^{j+k} 
=\frac{(-1)^{j} }{2\pi\ii}
\intiooioo \frac{\dd^j}{\dd z^j}\parfrac{1}{\Ai(z)}
\cdot\frac{\dd^k}{\dd z^k}\parfrac{1}{\Ai(z)} {\dd z}.
\end{equation}
\end{theorem}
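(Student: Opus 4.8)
The plan is to start from the formula \eqref{vak} for $\E V^n$ with $n=j+k$ and move $j$ of the $n$ derivatives from one factor $\Ai(z)^{-1}$ onto the other by integrating by parts $j$ times. Concretely, write the integrand of \eqref{vak} as $u(z)\,v(z)$ where $v(z)=\bigpar{\Ai(z)}^{-1}$ carries no derivative and $u(z)=\frac{\dd^{j+k}}{\dd z^{j+k}}\bigpar{\Ai(z)}^{-1}=\frac{\dd^{j}}{\dd z^{j}}\Bigpar{\frac{\dd^{k}}{\dd z^{k}}\bigpar{\Ai(z)}^{-1}}$. Applying integration by parts once along the contour $\Re z=\gsb$ moves a derivative from $u$ to $v$ at the cost of a sign; doing this $j$ times turns the integrand into $(-1)^j\,\frac{\dd^{k}}{\dd z^{k}}\bigpar{\Ai(z)}^{-1}\cdot\frac{\dd^{j}}{\dd z^{j}}\bigpar{\Ai(z)}^{-1}$, which is exactly the integrand claimed in \eqref{jut}.

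The main thing to check is that the boundary terms vanish. On the contour of \eqref{vak}, namely the imaginary axis $z=\ii y$ (or any vertical line $\Re z=\gsb>a_1$, which is permissible by the remark following \eqref{chf}), the estimate \eqref{sw} together with the Cauchy-estimate consequence noted there shows that every derivative $\frac{\dd^m}{\dd z^m}\bigpar{\Ai(z)}^{-1}$ is $O\bigpar{e^{-c|y|^{3/2}}}$ as $y\to\pm\infty$, uniformly on the contour. Hence each partial boundary term arising at intermediate stages of the integration by parts — a product of two such derivatives — decays rapidly and its value at $\pm\ii\infty$ is $0$. So all boundary contributions drop out, and the rapid decay also justifies the convergence of every integral involved and the legitimacy of integrating by parts in the first place.

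The one genuinely routine point to get right is just bookkeeping of which derivative lands where after $j$ steps: after the first integration by parts the two factors are $\frac{\dd^{j+k-1}}{\dd z^{j+k-1}}\bigpar{\Ai(z)}^{-1}$ and $\frac{\dd}{\dd z}\bigpar{\Ai(z)}^{-1}$; an easy induction on $j$ shows that after $j$ steps they are $\frac{\dd^{k}}{\dd z^{k}}\bigpar{\Ai(z)}^{-1}$ and $\frac{\dd^{j}}{\dd z^{j}}\bigpar{\Ai(z)}^{-1}$ with total sign $(-1)^j$, giving \eqref{jut}. Note that the case $j=0$ is \eqref{vak} itself, and the symmetry $\E V^{j+k}=\E V^{k+j}$ is reflected in \eqref{jut} by the substitution $z\mapsto -\bar z$ (equivalently, reflecting the contour), using that $\Ai$ is real on the real axis so $\overline{\Ai(\ii y)}=\Ai(-\ii y)$; this is a consistency check rather than part of the proof.

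I do not expect any real obstacle here: the only thing that could go wrong is an unjustified interchange or a surviving boundary term, and both are killed by the Schwartz-class decay already established for $\hatt g$ and hence for $\bigpar{\Ai(z)}^{-1}$ and all its derivatives along vertical lines. The proof is therefore short: invoke \eqref{vak}, integrate by parts $j$ times along $\Re z=\gsb$, discard the boundary terms using \eqref{sw}, and read off \eqref{jut}.
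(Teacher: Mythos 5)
Your proposal is correct and is essentially the paper's own argument: the paper likewise takes \eqref{vak} as the base case $j=0$ and obtains \eqref{jut} by integrating by parts $j$ times (phrased there as the recursion $J(j,k)=-J(j-1,k+1)$ plus induction). Your explicit verification that the boundary terms vanish, via the rapid decay of all derivatives of $\Ai(z)^{-1}$ along vertical lines from \eqref{sw}, is a detail the paper leaves implicit but is exactly the right justification.
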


\begin{proof}
  For $j=0$, this is \eqref{vak}.

If we denote the integral on the \rhs{} of \eqref{jut} by $J(j,k)$, then, for
$j,k\ge0$, integration by parts yields
$A(j,k)=-A(j-1,k+1)$, and the result follows by induction.
\end{proof}

Since $\E V^{j+k}=\E V^{k+j}$,
we see again by symmetry that $\E V^n=0$ when $n$ is odd.
For even $n$, it is natural to take $j=k=n/2$ in \eqref{jut}.
For small $n$, this yields the following 
examples.
First, $n=j=k=0$ yields
\begin{equation}\label{ev0}
1=  \E V^{0} 
=\frac{1 }{2\pi\ii}
\intiooioo \frac{1}{\Ai(z)^2},
\end{equation}
as noted by \citet{DS85};
this is easily verified directly, since $\pi\Bi(z)/\Ai(z)$ is a
primitive function of $1/\Ai^2$, see \cite{AlbrightG86}. 

Next, for $n=2$ and $n=4$ we get
\begin{equation}\label{ev2}
  \begin{split}
  \E V^{2} 
=\frac{-1}{2\pi\ii}
\intiooioo \lrpar{\frac{\dd}{\dd z}\parfrac{1}{\Ai(z)}}^2
{\dd z}
=\frac{-1}{2\pi\ii} \intiooioo \frac{\Ai'(z)^2}{\Ai(z)^4}{\dd z}
  \end{split}
\end{equation}
and
\begin{equation}\label{ev4}
  \begin{split}
  \E V^{4} 
&=\frac{1}{2\pi\ii}
\intiooioo\lrpar{ \frac{\dd^2}{\dd z^2}\parfrac{1}{\Ai(z)}}^2
{\dd z}
\\&
=\frac{1}{2\pi\ii}
\intiooioo\lrpar{-\frac{z}{\Ai(z)}+ \frac{2\Ai'(z)^2}{\Ai(z)^3}}^2
{\dd z}.	
  \end{split}
\end{equation}

\begin{remark}\label{Rdiff}
  Since $\Ai''(z)=z\Ai(z)$, it follows by induction that the $m$:th derivative
$\frac{\dd^m}{\dd z^m}\bigpar{\Ai(z)\qw}$ can be expressed as a linear
  combination (with integer coefficients)
of terms
\begin{equation}\label{rdiff}
  \frac{z^j\Ai'(z)^k}{\Ai(z)^\ell}  
\end{equation}
with $j,k\ge0$, $2j+k\le m$ and $\ell=k+1$.
\end{remark}

\section{Some combinatorics of Airy integrals}\label{Scomb}

Inspired by \refR{Rdiff}, we define in general, for any integers
$j,k,\ell\ge0$ with $\ell>k$, 
\begin{equation}
  I(j,k,\ell)\=
\frac{1}{2\pi\ii} \intiooioo \frac{z^j\Ai'(z)^k}{\Ai(z)^\ell}{\dd z}.
\end{equation}
(The integrand decreases rapidly as $z\to\pm\ii\infty$ because $\ell>k$, 
by \eqref{sw} and $\Ai'(z)/\Ai(z)\sim -z\qq$ as $|z|\to\infty$ 
in any sector $|\arg z|\le\pi-\eps$ with $\eps>0$
\cite[10.4.59 and 10.4.61]{AS};
thus the
integral is absolutely convergent.)
Then, recalling $\Ai''(z)=z\Ai(z)$, for any $j,k\ge0$ and $\ell>k$,
\begin{equation}
  \begin{split}
  0&=
\frac{1}{2\pi\ii} \intiooioo 
 \frac{\dd}{\dd  z}\frac{z^j\Ai'(z)^k}{\Ai(z)^\ell}{\dd z} 
\\&
=j I(j-1,k,\ell)+k I(j+1,k-1,\ell-1)-\ell I(j,k+1,\ell+1),
  \end{split}
\end{equation}
where we for convenience define $I(j,k,\ell)=0$ for $j<0$ or $k<0$.
Consequently, for $j,k\ge0$ and $\ell>k$,
\begin{equation}
  I(j,k+1,\ell+1)=\frac j\ell I(j-1,k,\ell)+\frac k\ell I(j+1,k-1,\ell-1),
\end{equation}
or, for $j\ge0$, and $\ell>k\ge1$,
\begin{equation}\label{e44}
I(j,k,\ell)=
  \begin{cases}
\frac{j}{\ell-1} I(j-1,0,\ell-1),
& k=1,
\\
\frac{j}{\ell-1} I(j-1,k-1,\ell-1)
+\frac{k-1}{\ell-1} I(j+1,k-2,\ell-2),
& k\ge2.	
  \end{cases}
\end{equation}
By repeatedly using this relation, any $I(j,k,\ell)$ may be expressed as a
rational combination of $I(p,0,\ell-k)$ with $0\le p\le j+k/2$.
For example,
\begin{align}
 I(0,1,3)&=0;  \label{i013}\\
 I(0,2,4)&=\tfrac13I(1,0,2);\label{i024}\\
 I(1,2,4)&=\tfrac13I(0,1,3)+\tfrac13I(2,0,2)=\tfrac13I(2,0,2);\label{i124}\\
 I(0,4,6)&=\tfrac35I(1,2,4)=\tfrac15I(2,0,2).\label{i046}
\end{align}

\section{Back to moments}\label{Spf}

\begin{proof}[Proof of \refT{T1}]
By combining 
\eqref{vak} and \refR{Rdiff}, 
we can express any moment  $\E V^n$ as a linear
combination with integer coefficients of terms
$I(j,k,k+2)$ with $2j+k\le n$.
By repeated use of \eqref{e44} (see the comment after it), 
this can be further developed into
a linear combination with rational coefficients
of terms
$I(j,0,2)$ with $0\le j\le n/2$.

For example,  by \eqref{ev2} and \eqref{i024},
\begin{equation}\label{ev2a}
  \E V^2=-I(0,2,4)=-\frac13I(1,0,2)=
-\frac{1}{6\pi\ii} \intiooioo \frac{z}{\Ai(z)^2}{\dd z},
\end{equation}
which yields \eqref{ev} by \eqref{vgam}.

To continue with higher moments we have next, by \eqref{ev4} and
\eqref{i124}--\eqref{i046}, 
\begin{equation}\label{ev4a}
  \begin{split}
\E V^4&=I(2,0,2)-4I(1,2,4)+4I(0,4,6)
\\&	
=\Bigpar{1-\frac43+\frac45}I(2,0,2)=\frac{7}{15}I(2,0,2)
\\&
=\frac{7}{30\pi\ii} \intiooioo \frac{z^2}{\Ai(z)^2}{\dd z}. 
  \end{split}
\end{equation}
Similarly (using Maple),
\begin{equation}\label{ev6a}
  \E V^6
=\frac{1}{2\pi\ii} \intiooioo \frac{(26-31z^3)/21}{\Ai(z)^2}{\dd z}. 
\end{equation}

In general this procedure yields, for some rational numbers $b_{nj}$,
\begin{equation}\label{eva}
  \E V^n
=\sum_{j=0}^{n/2} b_{nj}I(j,0,2)
=\frac{1}{2\pi\ii} \intiooioo \frac{p_n(z)}{\Ai(z)^2}{\dd z}. 
\end{equation}
where $p_n(z)\=\sum_{j=0}^{n/2}b_{nj}z^j$ is a polynomial of degree at most $n/2$.
By \eqref{vgam}, this is equivalent to the more general \eqref{evk}.
\end{proof}

For odd $n$, we already know that $\E V^n=0$, so we are mainly interested in
$p_n$ for even $n$. 
The polynomials $p_0,p_2,p_4,p_6$ are implicit in 
\eqref{ev0}, \eqref{ev2a}, \eqref{ev4a} and \eqref{ev6a}, and some further 
cases (computed with Maple) are given in \refTab{Tab}.

\begin{table}
\begin{align*}
  p_0(z)&=1\\
p_2(z)&=-\tfrac13 z\\
p_4(z)&=\tfrac7{15}z^2\\
p_6(z)&=-\tfrac{31}{21}z^3+\tfrac{26}{21}\\
p_8(z)&=\tfrac{127}{15}z^4-\tfrac{196}9z\\
p_{10}(z)&=-\tfrac{2555}{33}z^5+\tfrac{13160}{33}z^2\\
p_{12}(z)&=
{\tfrac {1414477}{1365}}\,{z}^{6}
-{\tfrac {2419532}{273}}\,{z}^{3}+
{\tfrac {1989472}{1365}}.
\end{align*}
\caption{The polynomials $p_n(z)$ for small even $n$.}
\label{Tab}
\end{table}

\begin{remark}
We can see from \refTab{Tab}  that 
(for these $n$) $p_n(z)$ contains only terms $z^j$ where $j\equiv n/2\pmod3$.
This is easily verified for all even $n$: 
a closer look at the induction in \refR{Rdiff} shows that only terms
\eqref{rdiff} with $2j+k\equiv m\pmod3$ appears,
and the reduction in \eqref{e44} preserves $2j+k\pmod 3$.
\end{remark}

\section{Problems and conjectures}\label{S?}

The proof above yields an algorithm for computing the polynomials $p_n(z)$,
but no simple formula for them. We thus ask the following.

\begin{problem}
Is there an explicit formula for the coefficients $b_{nj}$, and thus for
the polynomials $p_n(z)$? Perhaps a recursion formula?
\end{problem}

We have computed $p_n(z)$ for $1\le n\le100$ by Maple, and based on the results
(see also \refTab{Tab}),
we make the following conjectures.

\begin{conjectures}
  \begin{romenumerate}
  \item \label{codd}
$p_n(z)=0$ for every odd $n$.
\item \label{cexact}
$p_n(z)$ has degree exactly $n/2$; \ie, the coefficient $b_{n,n/2}$ of
  $z^{n/2}$ is non-zero.
\item \label{csinh}
These leading coefficients have exponential generating function
\begin{equation}\label{gf}
  \sum_{n=0}^\infty \frac{b_{n,n/2}}{ n!}x^n
=
\frac{x}{\sinh x}.
\end{equation}
  \end{romenumerate}
\end{conjectures}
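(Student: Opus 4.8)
The plan is to treat the reduction algorithm of \refS{Spf} as a single $\mathbb Q$-linear operator and then to solve an explicit recursion for the leading coefficients. Write $h(z)\=1/\Ai(z)$ and let $W$ be the $\mathbb Q$-vector space spanned by the monomials $z^j\Ai'(z)^k/\Ai(z)^{k+2}$ ($j,k\ge0$); it is cleanest to read all of this inside the differential $\mathbb Q$-algebra $\mathbb Q[z,\Ai,\Ai',\Ai^{-1}]$ with $\partial\Ai=\Ai'$, $\partial\Ai'=z\Ai$, in which these monomials are genuinely linearly independent. Iterating \eqref{e44} defines a $\mathbb Q$-linear map $\Lambda\colon W\to\mathbb Q[z]$ with $\frac1{2\pi\ii}\intiooioo u\,\dd z=\frac1{2\pi\ii}\intiooioo \Lambda(u)(z)/\Ai(z)^2\,\dd z$, and by \eqref{vak} and \refR{Rdiff} one has $h\cdot h^{(n)}\in W$ and $p_n=\Lambda\bigpar{h\cdot h^{(n)}}$. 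Two properties of $\Lambda$ would be recorded first: \emph{(a)} $\Lambda$ is well defined, because the rewriting \eqref{e44} strictly decreases the multiset of exponents $k$ and acts on distinct monomials independently (terminating and locally confluent); and \emph{(b)} $\Lambda(\partial w)=0$ for all $w\in W$, which is exactly the identity underlying \eqref{e44} extended by linearity. Part (i) is then immediate: for $n=2m+1$, telescoping integration by parts gives the ring identity $h\cdot h^{(n)}=\partial\Phi_n$ with $\Phi_n=\sum_{i=0}^{m-1}(-1)^i h^{(i)}h^{(2m-i)}+\tfrac{(-1)^m}2\bigpar{h^{(m)}}^2$, and $\Phi_n\in W$ by \refR{Rdiff}, so $p_n=\Lambda(\partial\Phi_n)=0$.

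For (ii) and (iii) I would isolate the leading coefficient by a weight grading: give $z^j\Ai'^k/\Ai^{k+2}$ the weight $2j+k$. The induction in \refR{Rdiff} shows that the weight-$n$ part of $h^{(n)}$ has the form $\sum_k a_{n,k}\,z^{(n-k)/2}\Ai'(z)^k/\Ai(z)^{k+1}$, and that $\partial$ carries the weight-$n$ part of $h^{(n)}$ to the weight-$(n+1)$ part of $h^{(n+1)}$ by $z^j\Ai'^k/\Ai^{k+1}\mapsto k\,z^{j+1}\Ai'^{k-1}/\Ai^k-(k+1)\,z^j\Ai'^{k+1}/\Ai^{k+2}$ (the remaining term of $\partial$ drops the weight by $2$). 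Setting $A_n(y)\=\sum_k a_{n,k}y^k$ this becomes
\begin{equation*}
A_0(y)=1,\qquad A_{n+1}(y)=(1-y^2)A_n'(y)-yA_n(y).
\end{equation*}
Furthermore \eqref{e44} is filtered for this grading: with $\ell=k+2$ its two terms $I(j-1,k-1,k+1)$ and $I(j+1,k-2,k)$ have weights $2j+k-3$ and $2j+k$, so the reduction never raises the weight, and iterating its weight-preserving branch telescopes to $I(j,k,k+2)=\tfrac1{k+1}I(j+\tfrac k2,0,2)+(\text{strictly lower weight})$ for even $k$ (for odd $k$ the weight-$n$ part never reaches a pure power of $\Ai^{-2}$, consistently with part (i)). Hence, for even $n$, the coefficient of $z^{n/2}$ in $p_n$ receives nothing from the lower-weight debris and equals $b_{n,n/2}=\sum_k a_{n,k}/(k+1)=\int_0^1 A_n(y)\,\dd y$.

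To solve the recursion, substitute $y=\tanh t$: then $(1-y^2)\tfrac{\dd}{\dd y}=\tfrac{\dd}{\dd t}$, the operator $(1-y^2)\tfrac{\dd}{\dd y}-y$ becomes $f\mapsto\cosh t\cdot\tfrac{\dd}{\dd t}\bigpar{f/\cosh t}$, and writing $A_n(\tanh t)=\cosh t\cdot\psi_n(t)$ the recursion collapses to $\psi_{n+1}=\psi_n'$ with $\psi_0=\operatorname{sech}t$; thus $\psi_n=(\operatorname{sech})^{(n)}$ and $A_n(\tanh t)=\cosh t\cdot(\operatorname{sech})^{(n)}(t)$. Since $\dd y=\operatorname{sech}^2t\,\dd t$ this gives $\int_0^1 A_n(y)\,\dd y=\int_0^\infty\operatorname{sech}(t)\,(\operatorname{sech})^{(n)}(t)\,\dd t$, so (interchanging sum and integral, legitimate for small $x$ by dominated convergence, then by analyticity)
\begin{equation*}
\sum_{n\ge0}\frac{x^n}{n!}\int_0^1 A_n(y)\,\dd y=\int_0^\infty\operatorname{sech}(t)\operatorname{sech}(t+x)\,\dd t=\frac{x-\ln\cosh x}{\sinh x},
\end{equation*}
the last equality from $\tfrac{\dd}{\dd t}\ln\tfrac{\cosh(t+x)}{\cosh t}=\tanh(t+x)-\tanh t=\tfrac{\sinh x}{\cosh t\cosh(t+x)}$. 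By part (i) the odd-$n$ terms of $\sum b_{n,n/2}x^n/n!$ vanish, so this generating function is the even part of the function just displayed; as $\ln\cosh x$ is even and $\sinh x$ odd, that even part is exactly $x/\sinh x$, which is (iii). Finally $[x^{2k}]\bigpar{x/\sinh x}=(2-2^{2k})B_{2k}/(2k)!$ with $B_{2k}$ the Bernoulli number (from $\operatorname{csch}x=\coth\tfrac x2-\coth x$ and the expansion of $x\coth x$), a nonzero rational for every $k\ge0$; hence $b_{n,n/2}\ne0$ for even $n$, which is (ii).

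The analytic steps here — the $y=\tanh t$ conjugation and the closed evaluation of the integral — are short. I expect the real care to be needed in the combinatorial bookkeeping: checking that $\Lambda$ is well defined and kills exact forms, and, above all, setting up the weight filtration precisely and verifying that \eqref{e44} respects it, so that the lower-order terms produced along the reduction provably cannot contribute to the coefficient of $z^{n/2}$. That filtration argument, rather than anything analytic, is the crux.
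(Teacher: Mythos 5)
The paper does not prove this statement at all: it is stated explicitly as a set of conjectures, supported only by Maple computations up to $n=100$ (``we have no general proof''). Your proposal is therefore not an alternative route to the paper's argument --- it is, as far as I can check, an actual proof of all three conjectures, and every key computation in it verifies. I checked: the telescoping identity $h\,h^{(2m+1)}=\partial\Phi_{2m+1}$ with your $\Phi$ (the cross terms cancel in pairs and the two $h^{(m)}h^{(m+1)}$ terms cancel, leaving $T_0$); that $\Lambda(\partial w)=0$ on monomials is literally the identity behind \eqref{e44}; the weight bookkeeping (the two branches of \eqref{e44} change $2j+k$ by $0$ and $-3$, the weight-preserving branch telescopes to $\tfrac1{k+1}I(j+k/2,0,2)$, and lower-weight debris can only feed coefficients $b_{n,j}$ with $j<n/2$); the recursion $A_{n+1}=(1-y^2)A_n'-yA_n$ and its solution $A_n(\tanh t)=\cosh t\cdot(\operatorname{sech})^{(n)}(t)$; and the closed form $\int_0^\infty\operatorname{sech}(t)\operatorname{sech}(t+x)\,\dd t=(x-\ln\cosh x)/\sinh x$, whose even part is $x/\sinh x$. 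Your formula $b_{n,n/2}=(2-2^n)B_n/n!\cdot n!$ reproduces every leading coefficient in \refTab{Tab}, including $1414477/1365$ for $n=12$, and nonvanishing of the even-index Bernoulli numbers gives (ii).

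Two points need to be nailed down in a write-up. First, the well-definedness of the expansion: the monomials $z^j\Ai'(z)^k\Ai(z)^{-\ell}$ must be linearly independent so that $p_n=\Lambda(h\,h^{(n)})$ is unambiguous; this follows from the algebraic independence of $\Ai$ and $\Ai'$ over $\bbC(z)$ (standard, via the differential Galois group of the Airy equation), or can be sidestepped by working formally in $\bbQ[z,A,A',A^{-1}]$ with $A''=zA$ throughout, as you suggest. Second, the paper's worked examples \eqref{ev2}--\eqref{ev4} actually start the reduction from the symmetric form $\int (h^{(n/2)})^2$ of \eqref{jut} rather than from $\int h\,h^{(n)}$ of \eqref{vak}; you should note that these yield the same polynomial, since $h^{(j)}h^{(k)}-(-1)^j h\,h^{(j+k)}$ is $\partial$ of an element of $W$ and is therefore killed by $\Lambda$ --- the same mechanism as your part (i). With those two remarks added, and the routine justification of the sum--integral interchange for small $x$, I see no gap; you have upgraded the paper's conjectures to a theorem.
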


Of these conjectures, \ref{codd} is natural, since we know that $\E V^n=0$
for odd $n$, and \ref{cexact} is not surprising. 
The  precise conjecture \eqref{gf} is perhaps more surprising. We
have verified that the coefficients up to $x^{100}$ agree, but we have no
general proof.

The simple form of \eqref{gf} suggests also the following open problem.

\begin{problem}
Is there an explicit formula for the generating function 
$\sum_{n=0}^\infty  p_n(z)x^n$? 
\end{problem}

\newcommand\AAP{\emph{Adv. Appl. Probab.} }
\newcommand\JAP{\emph{J. Appl. Probab.} }
\newcommand\JAMS{\emph{J. \AMS} }
\newcommand\MAMS{\emph{Memoirs \AMS} }
\newcommand\PAMS{\emph{Proc. \AMS} }
\newcommand\TAMS{\emph{Trans. \AMS} }
\newcommand\AnnMS{\emph{Ann. Math. Statist.} }
\newcommand\AnnPr{\emph{Ann. Probab.} }
\newcommand\CPC{\emph{Combin. Probab. Comput.} }
\newcommand\JMAA{\emph{J. Math. Anal. Appl.} }
\newcommand\RSA{\emph{Random Struct. Alg.} }
\newcommand\ZW{\emph{Z. Wahrsch. Verw. Gebiete} }
\newcommand\DMTCS{\jour{Discr. Math. Theor. Comput. Sci.} }

\newcommand\AMS{Amer. Math. Soc.}
\newcommand\Springer{Springer-Verlag}
\newcommand\Wiley{Wiley}

\newcommand\vol{\textbf}
\newcommand\jour{\emph}
\newcommand\book{\emph}
\newcommand\inbook{\emph}
\def\no#1#2,{\unskip#2, no. #1,} 
\newcommand\toappear{\unskip, to appear}

\newcommand\webcite[1]{
\texttt{\def~{{\tiny$\sim$}}#1}\hfill\hfill}
\newcommand\webcitesvante{\webcite{http://www.math.uu.se/~svante/papers/}}
\newcommand\arxiv[1]{\webcite{arXiv:#1.}}

\def\nobibitem#1\par{}

\end{document}